\newcommand{\norm}[1]{\left\lVert#1\right\rVert}
\newtheorem{assumption}{\textbf{Assumption}}
\newtheorem{theorem}{\textbf{Theorem}}
\newtheorem{lemma}{\textbf{Lemma}}
\newtheorem{condition}{\textbf{Condition}}
\renewenvironment{proof}{{\bfseries Proof. }}{\qed}
\renewenvironment{proof}[1][\proofname]{{\bfseries #1. }}{\qed}
\def\BibTeX{{\rm B\kern-.05em{\sc i\kern-.025em b}\kern-.08em
    T\kern-.1667em\lower.7ex\hbox{E}\kern-.125emX}}
\begin{document}
\title{Strategic Investments of Large Scale Battery Energy Storage Systems in the Wholesale Electricity Market}

\author{Ang Li;
		Yubo Wang,~\IEEEmembership{Senior Member,~IEEE;}
        Lei Fan,~\IEEEmembership{Senior Member,~IEEE;}
        Jiming Peng

\thanks{Ang Li is with the Department of Industrial Engineering at University of Houston, Houston, TX, E-mail: ali21@central.uh.edu, and is with Siemens Technology, NJ, USA, Jiming Peng is with the Department of Industrial Engineering at University of Houston, Houston, TX, E-mail: jopeng@central.uh.edu, Yubo Wang is with the Siemens Technology, NJ, USA, E-mail: ybwang25@gmail.com, Lei Fan is with the Department of Engineering Technology at University of Houston, Houston, TX, E-mail: lfan8@uh.edu.}
}

\maketitle

\begin{abstract}

In this paper, we study the strategic investment problem of  battery energy storage systems (BESSs) in the wholesale electricity market from the perspective of BESSs owners. Large-scale BESSs planning without considering the possible wholesale market price change may result in possible locational marginal price (LMP) changes. To overcome such limits, we propose a three-phase approach for the BESS investment problem. In Phase-1, we conduct a search for the optimal BESS configurations via a congestion-based heuristics and Bayesian optimization. In Phases 2 and 3, we alternatively dispatch optimization tasks to optimize the wholesale market clearing for LMPs and identify the optimal schedule for BESSs operations. We validate the model using a ten-year simulation on the Electric Reliability Council of Texas (ERCOT) market. Experimental results show that the Bayesian optimization model runs 16 times faster than the grid search model in achieving the same solution quality. Further, the iterative method demonstrates that the model without considering possible LMP changes makes a 21\% profit overestimation. 

\end{abstract}

\begin{IEEEkeywords}
Battery Energy Storage System, Sizing and Siting, Locational Marginal Pricing
\end{IEEEkeywords}

\def\Tt{\mathcal{T}}

\section*{Nomenclature}
\addcontentsline{toc}{section}{Nomenclature}
\begin{IEEEdescription}[\IEEEusemathlabelsep\IEEEsetlabelwidth{$V_1,V_2$}]
	\item [A. Sets]
	\item[$\Tt$] The index set of operation time span.
	\item[$I$] The index set of candidate batteries. 
	\item[$L$] The index set of transmission lines. 
	\item[$M$] The index set of traditional generating units. 
	\item[$B$] The index set of buses. 
	\item [B. Parameters]
	\item[$T$] The cardinality of $ \Tt $. 
	\item[$F_i$] The fixed cost of installing battery $ i $. 
	\item[$G_i$] The cost of battery $ i $ per unit of capacity. 
	\item[$\beta_m$] The generation cost of generator $ m $. 
	\item[$FL_l$] The transmission capacity on transmission line $ l $. 
	\item[$LD_{bt}$] The load of bus $ b $ at time period $ t $. 
	\item[$SF$] The power transfer distribution factors (PTDF), also known as the shift factor. 
	\item[$A$] The total budget. 
	\item[$\kappa^{c}$] The maximum charging rate coefficient for battery $ i $. 
	\item[$\kappa^{d}$] The maximum discharging rate coefficient for battery $ i $. 
	\item[$S^{l}$] The minimum state of charge (SOC) for the battery energy storage.
	\item[$S^{u}$] The maximum state of charge (SOC) for the battery energy storage.
	\item[$P^{l}_{m}$] The minimum power generation of generator $ m $.
	\item[$P^{u}_{m}$] The maximum power generation of generator $ m $.
	\item[$\lambda_t$] Electricity prices at period $t$.
	\item[$\eta^{c}$] The charging efficiency.
	\item[$\eta^{d}$] The discharging efficiency.
	\item[$\Delta$] The length of time interval. 
	\item [C. Decision Variables]
	\item[$p_{it}^{c}$] The charging power of battery $ i $ at period $t$.
	\item[$p_{it}^{d}$] The discharging power of battery $ i $ at period $t$. 
	\item[$p_{mt}$] The power generation of generator $ m $ at period $t$. 
	\item[$\alpha_{it}^{c}$] The charging bid of battery $ i $ at period $t$.
	\item[$\alpha_{it}^{d}$] The discharging bid of battery $ i $ at period $t$.
	\item[$c_i$] The capacity of battery $ i $.
	\item[$y_i$] The binary indicator of installing battery $ i $ or not. 
	\item[$e_{it}$] The stored energy of battery $ i $ at time period $t$. 
\end{IEEEdescription}

\section{Introduction}\label{sec:intro}

In recent years, large amounts of renewable energy resources are integrated into the electrical grid, changing the operation and control of the electrical grid drastically~\cite{jiang2011robust}. 
Traditional decision methods and approaches cannot deal with the high volatility and uncertainty of these renewable energy resources~\cite{jiang2011robust}.
The installation of the energy storage systems (ESSs) in electrical grid can improve the flexibility of the power grid, which further provides more options for system operators to deal with the uncertainties from the non-dispatchable renewable energy resources~\cite{ribeiro2001energy}. 
Battery energy storage systems (BESSs) become one of the most popular technologies in the ESSs because the BESSs have the characteristics such as high power and energy densities, and short lead time of installation~\cite{chen2009progress}. 

From the BESSs owner's perspective, it is crucial 
to first identify the optimal installation location of the BESSs and the optimal BESSs size, and then identify the optimal operational schedule for the BESS based on locational marginal prices. 
Numerous researchers have explored BESSs location problem in the literature. 
Researchers in~\cite{zhao2015review} reviewed the wind power integration and ESS siting and sizing. 
Researchers in~\cite{zidar2016review} reviewed researches on ESS sizing and siting, and discussed challenges of integrating ESS into distribution networks. 
Researchers in~\cite{celli2009optimal} studied the distribution network planning including ESSs by applying the genetic algorithm and dynamic programming. 
Researchers in~\cite{padhee2019fixed} studied BESS allocation problem for transmission network under uncertainties. 
The authors in~\cite{li2016optimal} studied the coupling of large-scale BESS and renewables such as wind farms and photovoltaic power stations in the transmission grid. 
The authors in~\cite{chen2020optimization} proposed a probabilistic non-linear optimization problem for BESS size and location. 
The authors in~\cite{talluri2021optimal} proposed a BESS scheduling strategy in a renewable energy community with methods including a machine learning-based forecast algorithm, mixed integer linear programming, and decision tree algorithm. 
The authors in~\cite{karanki2013optimal} proposed a loss sensitivity based algorithm to determine the optimal site and size of BESS in the distribution system, analyzing the influence of BESS allocation and sizing on system characteristics. 
The authors in~\cite{pandvzic2014near} studied the optimal siting and sizing of BESSs to reduce transmission network congestion and enable large-scale integration of renewable energy resources. 
Note that the installation of   large-scale BESSs in power grids can have a significant impact on the  pricing of the electricity in the  grid \cite{pandvzic2014near}. 
However, most existing works in BESSs siting and sizing assume the fixed nodal price in the wholesale market and ignore the possible nodal price changes 
caused by the installation of   the BESSs. 

In this paper, we focus on the BESSIP in the electricity market from the perspective of the BESSs owners, such as independent power producers (IPPs) and utilities~\cite{wankmuller2017impact,krishnamurthy2017energy,fang2018mean}.
Particularly, we take  the impact on the electricity price in the electrical grid from  large-scale BESSs with different sizes and installation locations into account, and investigate how the installation of BESSs may affect the BESSs' investment return and operational costs. 
The BESS sizing and siting problem solution can help IPPs and utilities with their strategic planning in the wholesale markets. In addition, the long-term simulation for the BESS sizing and siting problem can help the IPPs and utilities analyze the BESS's investment return. 

Efforts have been made by Independent System Operators (ISOs) to develop regulations and policies for the integration of BESSs into the electrical grid. 
Due to the distinctive operating characteristics of BESSs, this integration involves both the ISOs and the BESSs owners. 
The ISOs aim to minimize the total system cost, maximize system reliability, and reduce system emissions. Meanwhile, BESSs owners aim to maximize their investment returns while minimizing operational costs. 
Pareto optimality is a concept widely used in economics and engineering, which refers to a state of resource allocation where it is impossible to improve one agent's objective function without negatively impacting another agent's. 
Many researchers have applied the Pareto optimality to the study of power system operation and planning. 
Researchers in \cite{ngatchou2005pareto} discussed metaheuristic approaches for solving multi-objective optimization problems. 
Researchers in \cite{yassami2010power} presented a new method for designing power system stabilizers using the Strength Pareto approach and compared it with genetic algorithms. 
Researchers in \cite{nikmanesh2016pareto} studied a uniform-diversity genetic algorithm to optimize PI/PID controllers in Load Frequency Control of power systems for Pareto optimization. 
Researchers in \cite{taheri2017economic} proposed a particle swarm optimization algorithm to solve the economic dispatch problem in power systems while considering environmental pollution. 
Researchers in \cite{wang2019multi} proposed a wind and photovoltaic power BESS robust scheduling model that considers the time-of-use price to reduce the influence of uncertainty in power output. 
Researchers in \cite{tan2015pareto} focused on optimizing the operation of a distributed BESS for energy arbitrage under dynamic pricing, taking into account the economic value and lifetime tradeoff of the BESS. Researchers in \cite{tan2015pareto} proved the optimal policy generated by a parallel algorithm is Pareto optimal. 
Researchers in \cite{wu2019vsc} proposed a new model for voltage source converter based BESSs that interface with power grids, incorporated into an active-reactive optimal power flow problem.  Researchers in \cite{wu2019vsc} developed a sequence of strong relaxations to transform the proposed problem into a mixed-integer second-order cone programming problem. 
Researchers in \cite{al2020reinforcement} proposed a cooperative SOC control scheme to alleviate capacity problems of BESSs used to mitigate over-voltage issues caused by high levels of distributed photovoltaic generation. 
New methodologies are required to address the computational costs of the planning models, theoretical explorations are also needed to understand the impact of BESSs on the electricity market.
The strategic planning of BESSs usually requires intensive computations. To reduce the computational workload, most existing studies on BESS sizing and siting problem simulated the BESSs' operations in a short time span and a relatively small distribution network or transmission network~\cite{celli2009optimal,padhee2019fixed,pandvzic2014near}. 
To address the computational challenge in large scale system, we propose to adopt  the Sequential Model-Based Optimization (SMBO) for parameter search developed  in~\cite{hutter2011sequential} to solve the siting and sizing optimization model in Phase 1. SMBO has been widely used in machine learning and hyperparameter optimization thanks to its efficiency and versatility~\cite{feurer2019hyperparameter}. It looks for new most promising parameters to refine the model in an iterative manner. For more discussion on SMBO and Bayesian optimization, we refer to \cite{snoek2012practical} and the references therein. 
We refer to~\cite{bergstra2011algorithms,thornton2013auto} for more details on SMBO and Bayesian optimization. 
In this paper, we propose a Bayesian optimization framework for BESS sizing and siting problem, which enables the BESS sizing and siting problem to be solved in a large distribution network or transmission network with thousands of buses and transmission lines. 
We also propose a heuristic method to effectively reduce the computational complexity of the Bayesian optimization framework.

Our main contributions are summarized as follows:

\begin{itemize}
\item We propose a three-phase approach for the BESSIP on large scale. The proposed approach  solves several   sub-problems in different phases, and uses the solutions from the sub-problems   to tune the parameters in the optimization models in these phases to better characterize the interaction between ISO and the BESS owners.  
\item We propose an effective heuristic method  to effectively solve the  siting and sizing problem to help the BESS owners to select the locations and sizes of the BESSs in Phase 1, and integrate it into the Bayesian optimization framework to further refine the underlying optimization model.
\item We studied the existence of Pareto optimality in the BESSIP model, and show that the optimal solution is always Pareto optimal. 
\item We propose an alternative iterative approach for a two-agent model to  accurately estimate the profit after installation of the BESSs. We also establish the convergence of the new algorithm. 
\end{itemize}

This paper is organized as follows. 
In Section~\ref{sec:formulation}, we describe the BESSIP and discuss the challenges and limits in it.  
In Section~\ref{sec:hyperopt},  we describe the three-phase approach for the BESSIP, and discuss how to effectively solve the subproblems in each phase. 
In Section~\ref{sec:exp}, we show the numerical experiments and case studies.
In the end, we conclude the paper in Section~\ref{sec:conclusion}.

\section{BESS Investment Model}\label{sec:formulation}

In this section, we introduce a new BESS investment problem  for large-scale BESSs (BESSIP)  based on a centralized planning model considering the economic dispatch with BESSs, which can be formulated as a two-agent model. BESSIP also takes the choices of the locations and sizes of the BESSs into account. Then we present variants of the BESSIP and study their properties.

\subsection{Two-Agent Model}\label{subsec:two-agent}
In this subsection, we propose a two-agent optimization model based on electricity market operation process in Electric Reliability Council of Texas (ERCOT). The first model is the economic dispatch model used by ISO. The second is the battery owners' self-scheduling model. 

\begin{subequations}\label{model:eco}
	\begin{align}
		&\min \sum_{i \in I}(\sum_{t=1}^{T}\alpha^c_{it}p_{it}^c+\sum_{t=1}^{T}\alpha^d_{it}p_{it}^d ) +\sum_{m \in M}\sum_{t=1}^{T}\beta_m p_{mt}\\
		&\mbox{s.t.\quad}\sum_{i \in I}p^d_{it}+\sum_{m \in M}p_{mt}=LD_t+\sum_{i \in I}p^c_{it}; \label{con:BESS-SS-balance}\\
		&\phantom{\mbox{s.t.\quad}} -FL_l\le \sum_{b \in B}SF_{lb} (\sum_{i \in I_b}p^d_{it}+\sum_{m \in M_b}p_{mt}  \nonumber \\
		&\phantom{\mbox{s.t.\quad}}\phantom{\mbox{s.t.\quad}}-LD_{bt}-\sum_{i \in I_b}p^c_{it}) \le FL_l, \forall l \in L; \label{con:BESS-SS-line}\\
		&\phantom{\mbox{s.t.\quad}} LD_t=\sum_{b \in B}LD_{bt}; \label{con:BESS-SS-demand}\\
		&\phantom{\mbox{s.t.\quad}} P_m^l \le p_{mt} \le P_m^u, \forall m \in M; \label{con:BESS-SS-box-pm}\\
		&\phantom{\mbox{s.t.\quad}} P^{cl}_{it} \le p^c_{it} \le P^{cu}_{it}, \forall i \in I, \forall t \in \Tt; \label{con:box-pc} \\
		&\phantom{\mbox{s.t.\quad}} P^{dl}_{it} \le p^d_{it} \le P^{du}_{it}, \forall i \in I, \forall t \in \Tt. \label{con:box-pd} 
	\end{align}
\end{subequations}
In model (\ref{model:eco}), the objective  is to  minimize the total cost  including the BESSs charging/discharging cost and thermal generation cost. (\ref{con:BESS-SS-balance}) represents the power balance constraints. (\ref{con:BESS-SS-line}) represents the power flow limits. (\ref{con:BESS-SS-demand}) represents the total demand. (\ref{con:BESS-SS-box-pm}) represents the minimum and maximum generation rates of thermal generators. 
(\ref{con:box-pc}), (\ref{con:box-pd}) represent the minimum and maximum charging/discharging rates of BESSs.
From the BESSs owner's perspective, a complete bidding strategy includes not only the biddings $ \alpha_{it}^c, \alpha_{it}^d $, but also lower and upper bounds for charging/discharging rates. 
Therefore, in the economic dispatch model~(\ref{model:eco}), we generalize the bidding strategy of the BESSs owner by adding constraints~(\ref{con:box-pc}), (\ref{con:box-pd}). 

Let $ \mu $, $ \pi^+ $, $ \pi^- $ be the Lagrangian multipliers to constraints~(\ref{con:BESS-SS-balance}), (\ref{con:BESS-SS-line}). The locational marginal price (LMP) on bus $ b\in B $ at time $ t\in \Tt $, denoted by $ \lambda $, is defined as
\begin{align}\label{def:lmp}
	\lambda = \mu + \sum_{l \in L}SF_{lb} (\pi^+_l-\pi^-_l). 
\end{align}

Once  the LMP $ \lambda $ is obtained based on the optimal solution of model~(\ref{model:eco}) and definition~(\ref{def:lmp}), the BESS owner needs to solve the following  scheduling problem  to determine the bidding strategy. 
\begin{subequations}\label{model:bat}
	\begin{align}
		&\min  F_{i}y_{i} + G_{i}c_{i} + \sum_{t=1}^{T}\lambda_t (p_{it}^c-p_{it}^d)  \label{obj:bat} \\
		&\mbox{s.t.\quad} 0\le p_{it}^{c} \leq \kappa^c c_{i}, \forall i \in I,  \forall t \in \Tt; \label{con:BESS-SS-pc} \\
		&\phantom{\mbox{s.t.\quad}} 0\le p_{it}^{d} \leq \kappa^d c_{i}, \forall i \in I,  \forall t \in \Tt;  \label{con:BESS-SS-pd} \\
		&\phantom{\mbox{s.t.\quad}} S^l c_{i} \leq e_{it} \leq S^u c_{i}, \forall i \in I,  \forall t \in \Tt; \label{con:BESS-SS-box-s}\\
		&\phantom{\mbox{s.t.\quad}} e_{i,t+1} = e_{it}+ p^{c}_{it}\eta^{c}\Delta\! -\!\frac{p^{d}_{it}}{\eta^{d}} \Delta, \forall i \in I,  \forall t \in \Tt;\label{con:BESS-SS-soc} \\
		&\phantom{\mbox{s.t.\quad}}c_i \le M y_{i}, \forall i \in I; \label{con:BESS-SS-int-M}\\
		&\phantom{\mbox{s.t.\quad}}	\sum_{i \in I}F_{i}y_{i} + \sum_{i \in I}G_{i}c_{i} \le A; \label{con:BESS-SS-budget}\\
		&\phantom{\mbox{s.t.\quad}} y_{i} \in \{0,1\}. \label{con:BESS-SS-int}
	\end{align}
\end{subequations}

In model (\ref{model:bat}), the objective  is to  minimize the total cost  including the BESSs charging/discharging cost, fixed and per unit of capacity cost. 
(\ref{con:BESS-SS-pc}), (\ref{con:BESS-SS-pd}) represent the minimum and maximum charging/discharging rates of BESSs, where $ \kappa^c, \kappa^d $ are power rating coefficients of BESSs. (\ref{con:BESS-SS-box-s}) represents the minimum and maximum energy storage of BESSs. (\ref{con:BESS-SS-soc}) represents the state of charge (SOC) of BESSs, where $ \eta^c, \eta^d $ are charging/discharging efficiencies and $ \Delta $ is the length of a time interval. (\ref{con:BESS-SS-budget}) represents the budget constraint.

The BESSs owner chooses a univalued bidding strategy function $ \mathcal{S} $ based on the regulations and specific trading strategies for arbitrage. 
The bidding strategy function $ \mathcal{S} $ takes solution to model~(\ref{model:bat}) and the LMP $ \lambda $ as inputs and generates the biddings. 
Specifically, after solving model~(\ref{model:bat}), the BESS owner can obtain their bidding strategy $ (\alpha^c_{it}, \alpha^d_{it}, P^{cl}_{it}, P^{cu}_{it}, P^{dl}_{it}, P^{du}_{it}) = \mathcal{S}(p_{i}^{c*},p_{i}^{d*},\lambda) $ from the optimal solution and the LMP. 
$ \alpha^c_{it}, \alpha^d_{it} $ are the price biddings generated by the BESSs owner, and $ P^{cl}_{it}, P^{cu}_{it}, P^{dl}_{it}, P^{du}_{it} $ are the lower and upper bounds for charging/discharging rates of the bidding. 
In ERCOT system, the BESSs owner only reports the biddings to the ERCOT market operator, and the BESSs status are defined as private information in the ERCOT market. 
The power system operation can be significantly influenced by the bidding strategy $\mathcal{S}$. Moreover, the BESSs owner needs to choose the bidding strategy function $\mathcal{S}$ carefully to maximize the profit while bidding into the ERCOT market, as the bidding prices cannot be higher than LMP for discharging and lower than LMP for charging, otherwise the BESSs owner cannot bid into the market and the loss of profit is inevitable. 
In this paper, we will examine how different bidding strategies affect the power market. Specifically, we will conduct a theoretical analysis to determine the circumstances in which the bidding strategies can ensure Pareto efficiency in the power market.

The two-agent BESSIP model~(\ref{model:eco})-(\ref{model:bat}) is a mixed-integer optimization model in large scale, and it cannot be solved efficiently by commercial servers and solvers.  
To address these issues, we propose a new three-phase optimization framework which can better reflect the interaction between the system operator and battery owners in the market. 
The three-phase optimization framework regards the size and location of the BESSs as hyperparameters of the optimization problem, since the selection of the locations and sizes of the BESSs is strategically decided by the BESS owners in advance.

\subsection{Variants of BESSIP Model}
The literature's first variant presumes that the fixed cost of $ F_i = 0 $. 
This variant is based on the fact that the fixed cost of installing BESS is typically negligible when compared to the overall cost of the BESSs. This is because BESSs are well known for its flexibility, adaptability and scalability in providing a range of services, at the cost of relatively higher per unit cost, compared to other energy storage technologies such as pumped hydroelectric storage. With fixed cost $ F_i = 0 $, the BESSIP model can eliminate its integer variables, resulting in the following model for the first variant.

\begin{subequations}\label{model:S}
	\begin{align}
		&\min G_{i}c_{i} + \sum_{t=1}^{T}\lambda_t (p_{it}^c-p_{it}^d) \\
		&\mbox{s.t.\quad} \mbox{Constraints~(\ref{con:BESS-SS-pc})-(\ref{con:BESS-SS-soc})}; \\
		&\phantom{\mbox{s.t.\quad}}	\sum_{i \in I}G_{i}c_{i} \le A. \label{con:S-budget}
	\end{align}
\end{subequations}

In the BESSIP variant model, the decision variable $c_i$ represents the capacity of the BESS at location $i$, and $c_i > 0$ if and only if BESS is installed at location $i$. 
$c_i$ can be categorized as hyperparameters in the relaxed BESSIP model, as the impact of $c_i$ on the objective function is difficult to quantify without solving the model itself. 
By fixing $c_i$, the BESSIP model can be reduced to a standard economic dispatch model considering the BESS. 

Another variant from the literature adds a complimentarity constraint as follows in the optimization problem. 
\begin{align}
	 p^{c}_{it}p^{d}_{it} = 0, \forall i \in I, \forall t \in \Tt; \label{con:complimentary}
\end{align}
This constraint reflects the inherent limitation that BESSs cannot charge and discharge simultaneously. Although this variant can enhance the BESSIP model's accuracy, it adds a bilinear constraint that is nonlinear and nonconvex, thereby increasing computational complexity.
We will conduct numerical experiments to compare the solution quality and computational costs of these variants of BESSIP model.

\section{Three-Phase Optimization Framework }\label{sec:hyperopt}
This section consists of four subsections. 
In Subsection~\ref{subsec:heuristic},  we propose a heuristics to solve the siting and sizing problem  in Phase 1 to help the BESS owners select the locations and sizes of the BESSs. 
In Subsection~\ref{subsec:three-phase}, we describe the integrated three-phase approach.  
Finally, in Subsection~\ref{subsec:convergence}, we discuss the convergence of the three-phase approach.

\subsection{A Heuristics for the BESS Siting and Sizing Problem}\label{subsec:heuristic}

In this subsection, we introduce a new heuristic method to narrow down the search space for promising BESSs location candidates and their sizes. 

We first observe that by installing BESSs in the electrical grid, the fluctuation of the supply and demand mismatch is shaved because BESSs can charge when the demand is low and discharge when the demand is high. On the other hand, BESSs can arbitrage chronologically by storing unused energy over low-price time periods. The price dynamics of the electricity market, however, are dominated by the marginal congestion cost component~(\ref{def:cong}) of the dual optimization problem. 

\begin{align}\label{def:cong}
	\sum_{l \in L}SF_{lb} (\pi^+_l-\pi^-_l). 
\end{align}

We remark that from duality theory, if $ \pi^+_l>0 $, the upper bound inequality of line $ l $ is active, and because any power injection to the buses corresponding to the positive component of PTDF will further increase the left hand side of the upper bound inequality in~(\ref{con:BESS-SS-line}), the multiplication of $ \pi^+_l $ and $ SF^+_{lb} $ maps the line congestion to the bus power injection. A similar conclusion can be drawn for the negative components. 

Hence, let $ \pi^+ $, $ \pi^- $ be the Lagrangian multipliers to constraints~(\ref{con:BESS-SS-line}).
Let $ SF^{+} $ and $ SF^{-} $ be the positive and negative components of the PTDF $ SF $ matrix respectively. 
We define the score of congestion on the buses as follows: 
\begin{align}
	S_{cong}=\mathbb{E}_{\Tt}[\norm{\sum_{l \in L}SF_{lb}^{+}\pi^+_l}_p + \norm{\sum_{l \in L}SF_{lb}^{-}\pi^{-}_l}_p]. 
\end{align}
In this paper, we use the following $ L_1 $ norm. 
\begin{align}
	S_{cong}=\mathbb{E}_{\Tt}[-\sum_{l \in L}(SF_{lb}^{+}\pi^+_l - SF_{lb}^{-}\pi^{-}_l)]. 
\end{align}

According to heuristics, BESSs make profits by power arbitrage in the wholesale electricity market. A higher score of congestion reflects higher price volatility, revealing the potential for the BESSs to have higher profits. Therefore, we narrow down our selection of candidate BESSs location to those with the highest $ S_{cong} $. 
$ S_{cong} $ can be estimated by simulation on the basis of historical data. In this paper, we calculate $ S_{cong} $ of past 5 years of ERCOT data, then we use $ S_{cong} $ as a heuristic criterion to generate candidate BESS installation configurations. Initially, each location is selected with a probability proportional to its $ S_{cong} $. In subsection~\ref{subsec:three-phase}, we propose a Bayesian optimization method to update the generative model and further refine the candidate BESS installation configurations.

\subsection{Integrated Three-Phase Approach}\label{subsec:three-phase}

In this subsection, we describe an integrated three-phase  approach for the BESSIP model, whose structure is shown in Figure~\ref{fig:3levels}. The proposed framework has three phases.  

In Phase 1 we try to find the most promising location and size of the BESSs in a pool of feasible sizes and locations using Bayesian optimization method\cite{hutter2011sequential} as shown in Algorithm~\ref{alg:bayes}. 

Based on the selected locations and sizes  (denoted by $ c_i $) in Phase 1, in Phase 2 we will simulate the electricity market operations in a long planning horizon (e.g., 10 years) and calculate the net present value (NPV) of return expectation with this location and size choice. Each iteration of the simulation will cover a 24 hours or 48 hours period. Then the simulation will rolling solve the whole study periods. 

In each iteration of the market simulation,  two optimization models will be solved in an iterative manner in Phase 3.  More specifically, In each iteration of the simulation in Phase 3, one agent represents the ISO to run the economic dispatch model (as in model (\ref{model:eco})) with the bids from battery owners and publish the LMPs. Then the battery owners run the self-scheduling model (as in Model (\ref{model:bat})) to find the best schedule and  the corresponding bidding strategy. Then the ISO agent will take the bids and re-solve the economic dispatch model (as in model (\ref{model:eco})) to find a new market clearing price. we repeat such a process  until convergence is reached. The detailed algorithm is shown in Algorithm~\ref{alg:aus}. The proof of convergence is given in the next subsection. 

In other words, we break down the whole optimization process into three phases. In each phase, we   deal with some reduced sub-problems that are relatively   easy to solve. Such an approach allows us to effectively obtain a good solution to the extremely challenging BESSIP in large-scale with limited  computational resources. 

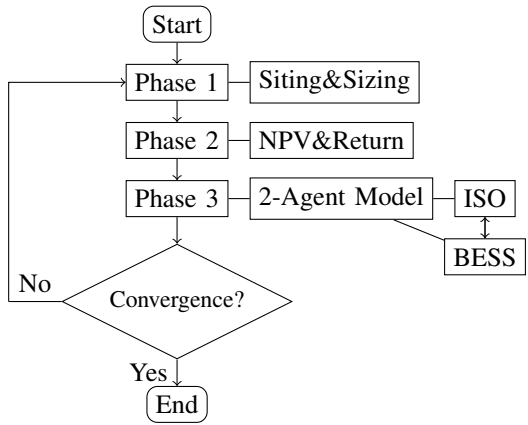
\begin{figure}[htbp]
	\centering
	\begin{tikzpicture}[node distance=8pt]
		\node[draw, rounded corners]                        (start)   {Start};
		\node[draw, below=of start]                         (level 1)  {Phase 1};
		\node[draw, right = of level 1]                      (hyp)  {Siting\&Sizing};
		\node[draw, below=of level 1]                        (level 2)  {Phase 2};
		\node[draw, right = of level 2]                      (dp)  {NPV\&Return};
		\node[draw, below=of level 2]                        (level 3)  {Phase 3};
		\node[draw, right = of level 3]                      (two)  {2-Agent Model};
		\node[draw, right = of two]                      (iso)  {ISO};
		\node[draw, below = of iso]                      (bess)  {BESS};
		\node[draw, diamond, aspect=2, below= 10pt of level 3, align=center]     (choice)  {\small Convergence?};
		\coordinate[left = 20pt of choice] (loop);
		\node[draw, rounded corners, below=10pt of choice]  (end)     {End};
		
		\draw[->] (start)  -- (level 1);
		\draw[->] (level 1) -- (level 2);
		\draw[->] (level 2) -- (level 3);
		\draw[->] (level 3) -- (choice);
		\draw[->] (choice) -- node[left]  {Yes} (end);
		\draw[-] (choice) -- node[above] {No}  (loop);
		\draw[->] (loop) -- (loop|-level 1) -> (level 1);
		\draw[-] (level 1)  -- (hyp);
		\draw[-] (level 2)  -- (dp);
		\draw[-] (level 3)  -- (two);
		\draw[->] (iso) -- (bess);
		\draw[->] (bess) -- (iso);
		\draw[-] (two)  -- (iso);
		\draw[-] (two)  -- (bess);
	\end{tikzpicture}
	\caption{Structure of BESSIP Optimization Framework}
	\label{fig:3levels}
\end{figure}

\begin{algorithm}
	\caption{Bayesian Optimization Framework }\label{alg:bayes}
	\begin{algorithmic}[0]
		\Begin
		\State Set $k=0$.
		\State Initialize location set $ I^0 $, capacities $ c_i^0 $ with Bayesian optimization, search history $ \mathcal{H} = \emptyset $. 
		\While{Computational resources not exhausted}
		\State Run Algorithm~\ref{alg:aus} with parameters $ I^k $, $ c_i^k $ to calculate expectation of return $ R $ and updated $ S_{cong} $. 
		\State $ \mathcal{H} = \mathcal{H} \cup \{(R,S_{cong},I^k,c_i^k)\} $. 
		\State Use Bayesian optimization to update probabilities and generate new most promising parameters $ I^{k+1} $, $ c_i^{k+1} $. 
		\State $ k = k+1 $. 
		\EndWhile
		\State Output search history $ \mathcal{H} $ and optimal parameters $ I^* $, $ c_i^* $. 
		\End
	\end{algorithmic}
\end{algorithm}

\begin{algorithm}
	\caption{Alternative Updating Scheme (AUS)}\label{alg:aus}
	\begin{algorithmic}[0]
		\Begin
		\State Set $k=0$.
		\State Initialize the trading strategy $ \mathcal{S}^0 $. 
		\While{Computational resources not exhausted \&\& $ k < \mbox{MAX\_ITER} $}
		\State Solve model~(\ref{model:eco}) with $ \mathcal{S}^k $ for $ \lambda^k $. 
		\State Solve model~(\ref{model:bat}) with $ \lambda^k $ for $ \mathcal{S}^{k+1} $. 
		\State $ k = k+1 $. 
		\If{$ k!= 1 $ \&\& $ \norm{\lambda^{k}-\lambda^{k-1}} < \epsilon $}
		\State Break. 
		\EndIf
		\EndWhile
		\State Output current value of the investment. 
		\End
	\end{algorithmic}
\end{algorithm}

\subsection{Sensitivity Analysis of Economic Dispatch}\label{subsec:sensitivity}

In this subsection, we present the sensitivity analysis of the economic dispatch model~(\ref{model:eco}) with respect to the bidding parameters. 
Let $ \mu $, $ \pi^+ $ and $ \pi^- $ be the Lagrangian multipliers to constraints~(\ref{con:BESS-SS-balance}), (\ref{con:BESS-SS-line}). Let $ \gamma^c, \gamma^d, \gamma $ be the Lagrangian multipliers to the box constraints. Then we can write the dual problem to model~(\ref{model:eco}) as follows. 

\begin{subequations}
	\begin{align}
		&\max \quad  (\sum_{b \in B}LD_{b})\mu + \sum_{l \in L}FL_l (\pi^+_l + \pi^-_l) \\
		&+ \sum_{i \in I}P^{cu}_i \gamma^{c+}_i -\sum_{i \in I}P^{cl}_i \gamma^{c-}_i \\
		&+ \sum_{i \in I}P^{du}_i \gamma^{d+}_i -\sum_{i \in I}P^{dl}_i \gamma^{d-}_i \\
		&+ \sum_{m \in  M}P^{u}_m \gamma^{+}_m -\sum_{m \in M}P^{l}_m \gamma^{-}_m  \\
		&\mbox{s.t.\quad}\mu + \sum_{l \in L}SF_{lb(i)} (\pi^+_l-\pi^-_l) + (\gamma^{d+}_i-\gamma^{d-}_i) = \alpha^d_i, \forall i \in I;  \\
		&\phantom{\mbox{s.t.\quad}} \mu + \sum_{l \in L}SF_{lb(i)} (\pi^+_l-\pi^-_l) + (\gamma^{c+}_i-\gamma^{c-}_i) = -\alpha^c_i, \forall i \in I;\\
		&\phantom{\mbox{s.t.\quad}} \mu + \sum_{l \in L}SF_{lb(m)} (\pi^+_l-\pi^-_l) + (\gamma^{+}_m-\gamma^{-}_m) = \beta_m, \forall m \in M; \\
		&\phantom{\mbox{s.t.\quad}} \mu  \text{\quad unrestricted}, \pi^{\pm}, \gamma^{c\pm}, \gamma^{d\pm}, \gamma^{\pm} \le 0. 
	\end{align}
\end{subequations}

Recall the definition of LMP of equation (\ref{def:lmp}), from the sensitivity analysis theory, we have the following theorem. 

\begin{theorem}\label{thm:sensitivity}
	There exists $ \alpha_{it}^{c,l} $, $ \alpha_{it}^{c,u} $, $ \alpha_{it}^{d,l} $, $ \alpha_{it}^{d,u} $, such that if $ \alpha_{it}^c \in [\alpha_{it}^{c,l}, \alpha_{it}^{c,u}] $, $ \alpha_{it}^d \in [\alpha_{it}^{d,l}, \alpha_{it}^{d,u}] $, then the current optimal solution of model~(\ref{model:eco}) is still optimal. Furthermore, let $ \mathcal{A} $ be the active set of the equality constraints, the LMP $ \lambda_t $ is locally a linear function of $ \alpha_{it}^c, \alpha_{it}^d \in \mathcal{A} $. 
\end{theorem}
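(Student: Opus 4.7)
The plan is to invoke standard linear programming sensitivity analysis, since model~(\ref{model:eco}) with $c_i$ fixed by Phase~1 and $P^{cl},P^{cu},P^{dl},P^{du}$ fixed by the bidding strategy is a linear program in which the bidding parameters $\alpha_{it}^c,\alpha_{it}^d$ appear only as objective-function coefficients. First, I would put the primal in standard form and let $B$ denote an optimal basis at the current parameter values, so that $x_B=B^{-1}b$ is the primal optimum and $y=c_B^{\top}B^{-1}$ is the dual optimum; by construction $y$ encodes the multipliers $\mu,\pi^{+},\pi^{-},\gamma^{c\pm},\gamma^{d\pm},\gamma^{\pm}$ used in the dual block displayed just above the theorem.

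Second, I would write out the optimality conditions for the basis $B$, namely the reduced-cost inequalities $\bar c_N=c_N-c_B^{\top}B^{-1}A_N\ge 0$ for the non-basic variables together with $x_B=B^{-1}b\ge 0$. The right-hand-side data are independent of the bidding parameters, so primal feasibility of $B$ is unaffected by perturbations of $\alpha_{it}^c,\alpha_{it}^d$; only the dual-feasibility (reduced-cost) inequalities are affected, and they depend affinely on $\alpha_{it}^c,\alpha_{it}^d$. Taking the intersection of these finitely many half-spaces with the axes of $\alpha_{it}^c$ and $\alpha_{it}^d$ yields intervals $[\alpha_{it}^{c,l},\alpha_{it}^{c,u}]$ and $[\alpha_{it}^{d,l},\alpha_{it}^{d,u}]$ within which the basis $B$ remains optimal and the current primal solution of~(\ref{model:eco}) is therefore still optimal.

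Third, I would use the closed-form expression $y=c_B^{\top}B^{-1}$ to read off the dependence of the dual multipliers on the bids. Since $B^{-1}$ depends only on the active-set structure $\mathcal{A}$ and not on the costs, $y$ is a linear (in fact affine) function of the entries of $c_B$, which in turn depend linearly on $\alpha_{it}^c,\alpha_{it}^d$ whenever the corresponding columns belong to $B$ (i.e.\ their indices belong to $\mathcal{A}$). Plugging this into the definition $\lambda_t=\mu_t+\sum_{l\in L}SF_{lb}(\pi^{+}_{lt}-\pi^{-}_{lt})$ in~(\ref{def:lmp}) gives a linear combination of linear functions, hence $\lambda_t$ is locally a linear function of the active bid variables, completing the argument.

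The main obstacle I anticipate is primal/dual degeneracy: at an optimal vertex shared by several bases, the ``active set $\mathcal{A}$'' and hence the selected $B^{-1}$ are not unique, so the sensitivity intervals and the linear formula for $\lambda_t$ depend on which basis is chosen and the map $\alpha\mapsto\lambda_t$ can be piecewise-linear rather than linear across a degeneracy boundary. I would handle this by phrasing the result one active set at a time, as the statement already does with its qualifier ``$\alpha_{it}^c,\alpha_{it}^d\in\mathcal{A}$'', so that the claim is that on the relative interior of each cell of the parametric sensitivity partition the LMP is a linear function of the bids. A minor secondary issue is making sure the perturbations of $\alpha^c$ and $\alpha^d$ are treated independently of the lower/upper bid bounds $P^{cl},P^{cu},P^{dl},P^{du}$, which is straightforward because those bounds enter only the right-hand sides of~(\ref{con:box-pc})--(\ref{con:box-pd}).
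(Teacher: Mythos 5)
Your proposal is correct and follows essentially the same route as the paper, which offers no written proof but simply displays the dual of model~(\ref{model:eco}) and appeals to standard LP sensitivity analysis; your basis-stability argument (reduced-cost intervals for the bids as objective coefficients, and $y=c_B^{\top}B^{-1}$ linear in $c_B$ yielding local linearity of $\lambda_t$ via~(\ref{def:lmp})) is precisely the machinery the paper implicitly invokes. Your additional remark on degeneracy and the piecewise-linear structure across basis changes is a genuine refinement that the paper glosses over.
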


From Theorem~\ref{thm:sensitivity}, we can see that the BESSs owner can adjust the bidding in an allowable range without impacting the optimal solution by sensitivity analysis. 
The BESSs owner could then solve a self-scheduling model (\ref{model:bat}) to decide when to charge and when to discharge. 
Then the BESSs owner wishes to reduce the LMP on charging intervals and increase the LMP on discharging intervals. 
Thus the BESSs owner could use the locally optimal biddings $ \alpha_{it}^c $, $ \alpha_{it}^d $ to bid in the ERCOT market. 
However, when the optimal solution changes, the sensitivity analysis method does not apply and we need to study the interaction between the ISO and the BESSs owner. 
Therefore, in the next subsection, we will establish the convergence of the AUS Algorithm \ref{alg:aus} and study the optimality of model (\ref{model:eco}) - (\ref{model:bat}).

\subsection{Convergence of Alternative Updating Scheme}\label{subsec:convergence}

In this subsection, we establish the convergence of the AUS. 

To avoid monopolization and manipulation of price, ISOs usually do market power mitigation to address non-competitive behaviors. Therefore, we use the following assumption to guarantee the existence of solution to model~(\ref{model:eco}) even when the BESS owner do not bid into the market. 
\begin{assumption}
	Model~(\ref{model:eco}) is feasible even when the BESS owner does not commit to the market.
\end{assumption}

As a result, we have the following lemma.

\begin{lemma}
	\label{lem:critical}
	There exists critical prices $ \underline{\lambda} $ and $ \overline{\lambda} $ such that $ \underline{\lambda} \le \lambda^* \le \overline{\lambda} $ no matter whether the BESS owner commits to the market or not. 
\end{lemma}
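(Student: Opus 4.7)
My plan is to derive the bounds from the dual formulation of model~(\ref{model:eco}) presented in Subsection~\ref{subsec:sensitivity}, combined with the boundedness of all problem data: generator costs $\beta_m$, generator limits $P^{l}_m, P^{u}_m$, line capacities $FL_l$, loads $LD_{bt}$, and BESS rated capacities $\kappa^{c}c_i, \kappa^{d}c_i$. First, I would invoke Assumption~1 to guarantee feasibility of model~(\ref{model:eco}) in both regimes: without BESS commitment (formally $P^{cu}_{it}=P^{du}_{it}=0$ in (\ref{con:box-pc})--(\ref{con:box-pd})) and with BESS commitment, where adding BESS only enlarges the feasible set. Feasibility together with the boundedness of the primal feasible set implies strong duality, so the optimal multipliers $\mu, \pi^{\pm}_l, \gamma^{\pm}_m$ exist and are finite for any bidding strategy $\mathcal{S}$.

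Second, to construct $\overline{\lambda}$, I would fix any generator $m$ whose dispatch $p_{mt}$ lies strictly inside its box $[P^{l}_m, P^{u}_m]$ at the optimum. Because total system demand typically far exceeds the aggregate BESS rated discharge capacity, such a marginal thermal generator must exist somewhere in the network (formally statable as a mild condition on problem data, or guaranteed by adding a high-cost slack source under Assumption~1). The dual constraint $\mu + \sum_l SF_{lb(m)}(\pi^+_l - \pi^-_l) + (\gamma^+_m - \gamma^-_m) = \beta_m$ then pins down $\lambda_{b(m)} = \beta_m$ at that bus, and at every other bus the LMP differs only by congestion contributions of the form $\sum_l SF_{lb}(\pi^+_l - \pi^-_l)$. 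A symmetric argument applied to the cheapest dispatchable generator yields $\underline{\lambda}$.

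Finally, the main obstacle is controlling the congestion contribution uniformly, since multipliers on tightly constrained lines could a priori be large. I would close this gap via the LP weak duality bound $\sum_l FL_l(\pi^+_l + \pi^-_l) \leq C_{\max}$, where $C_{\max}$ denotes the cost of serving all demand with the most expensive generator and depends only on $\max_m \beta_m$, $\sum_b LD_{bt}$, and $T$. Since the PTDF values $SF_{lb}$ are bounded network constants, this yields a uniform bound on the congestion contribution, and hence on $\lambda^*$. The crucial feature is that all the constants appearing in the bounds depend only on fixed system data and not on $\mathcal{S}$, $\alpha^c_{it}$, or $\alpha^d_{it}$; combining the estimates produces the claimed $\underline{\lambda}, \overline{\lambda}$, which hold regardless of whether the BESS owner commits to the market.
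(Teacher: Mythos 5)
Your proof is correct in its core mechanism, and that mechanism is the same one the paper relies on, but you develop it considerably further than the paper does. The paper's own proof simply writes out the KKT system of model~(\ref{model:eco}) and concludes by ``recalling the definition of LMP in~(\ref{def:lmp})''; the intended but implicit argument is exactly what you make explicit: by Assumption~1 the thermal fleet alone can serve the load, so some generator is marginal, its stationarity condition $\mu+\sum_{l}SF_{lb(m)}(\pi^+_l-\pi^-_l)+(\gamma^{+}_m-\gamma^{-}_m)=\beta_m$ pins the LMP at its bus to $\beta_m$, and $\beta_m$ is fixed system data independent of the bids $\alpha^c,\alpha^d$ and of whether the BESS commits. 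Where you go beyond the paper is in (i) arguing for the existence of such a marginal generator and (ii) attempting a uniform bound on the congestion spread $\sum_{l}SF_{lb}(\pi^+_l-\pi^-_l)$; the paper does neither. Step (ii) is the only place that needs more care: weak duality bounds the entire dual objective, not the single term $\sum_{l}FL_l(\pi^+_l+\pi^-_l)$, so to isolate that term you must first control $\mu$ and the box multipliers $\gamma$ (which you can do from the generator-cost range and the sign constraints on the multipliers), and even then $FL_l\lvert\pi_l\rvert\le C_{\max}$ controls $\lvert\pi_l\rvert$ only for lines whose capacity $FL_l$ is bounded away from zero. These are fixable technical points, and your write-up is already more complete than the proof given in the paper.
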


\begin{proof}
	Let $ (p_{it}^{c},p_{it}^{d},p_{mt}) $ be the objective solution to model~(\ref{model:eco}), then it satisfies the KKT conditions:
	\begin{align*}
		&\alpha_{it}^c - \mu_t - \sum_{l \in L}SF_{lb(i)}\pi^{+}_{l}+\sum_{l \in L}SF_{lb(i)}\pi^{-}_{l} \nonumber\\
		&+ \iota^{c+}_t-\iota^{c-}_t = 0, \forall t \in \Tt, \forall i \in I;\\
		&\alpha_{it}^d + \mu_t + \sum_{l \in L}SF_{lb(i)}\pi^{+}_{l}-\sum_{l \in L}SF_{lb(i)}\pi^{-}_{l} \nonumber\\
		&+ \iota^{d+}_t-\iota^{d-}_t = 0, \forall t \in \Tt, \forall i \in I;\\
		&\beta_{m} + \mu_t + \sum_{l \in L}SF_{lb(m)}\pi^{+}_{l}-\sum_{l \in L}SF_{lb(m)}\pi^{-}_{l} \nonumber\\
		&+ \upsilon^{c+}_t-\upsilon^{c-}_t = 0, \forall t \in \Tt, \forall m \in M;\\
		&\iota^{c+}_t(p_{it}^c-P^{cu}_{it}) = 0, \forall t \in \Tt;\\
		&\iota^{c-}_t (p_{it}^c-P^{cl}_{it}) = 0, \forall t \in \Tt;\\
		&\iota^{d+}_t(p_{it}^d-P^{du}_{it}) = 0, \forall t \in \Tt;\\
		&\iota^{d-}_t (p_{it}^d-P^{dl}_{it}) = 0, \forall t \in \Tt;\\
		&\upsilon^{+}_t(p_{mt}-P^u_m) = 0, \forall t \in \Tt;\\
		&\upsilon^{-}_t(p_{mt}-P^l_m) = 0, \forall t \in \Tt;\\
		&\pi^{\pm}\geq 0, \iota^{c\pm}_t\geq 0, \iota^{d\pm}_t\geq 0, \upsilon^{\pm}_t\geq 0, \forall t \in \Tt;\\
		&\mbox{(\ref{con:BESS-SS-balance})-(\ref{con:box-pd})}.
	\end{align*}
	Where $ b(i) $ denotes the bus of BESS $ i $ and $ b(m) $ denotes the bus of generator $ m $. 
	Recall the definition of LMP in (\ref{def:lmp}), this completes the proof. 

\end{proof}

As a result, the reasonable bids of BESS owner is a compact set. Combining Lemma~\ref{lem:critical} and Theorem~\ref{thm:sensitivity}, we have, 
\begin{theorem}\label{thm:nash}
	There exists finite Nash equilibrium of the problem between the BESS owner and the ISO.
\end{theorem}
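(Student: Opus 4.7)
The plan is to treat the ISO--BESS owner interaction as a best-response game on a compact bidding space, then exploit the piecewise-linear structure inherited from the LP dual of model~(\ref{model:eco}) to conclude that only finitely many fixed points of the composed best-response map can occur. First I would formalize the game: the BESS owner's strategy is the bidding tuple $(\alpha^c_{it},\alpha^d_{it},P^{cl}_{it},P^{cu}_{it},P^{dl}_{it},P^{du}_{it})$ produced by $\mathcal{S}$, while the ISO's response is the LMP vector $\lambda$ obtained from (\ref{def:lmp}) after solving~(\ref{model:eco}). By Lemma~\ref{lem:critical}, any rational bid must lie in $[\underline{\lambda},\overline{\lambda}]$ componentwise, since bids outside that interval are either dominated (never cleared) or equivalent to not committing. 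This confines the strategy space to a compact polytope $\mathcal{B}$.

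Next I would use Theorem~\ref{thm:sensitivity} to partition $\mathcal{B}$ into finitely many polyhedral cells $\{\mathcal{B}_k\}_{k=1}^{K}$, each corresponding to a fixed active set $\mathcal{A}_k$ of the dual of~(\ref{model:eco}). Inside cell $\mathcal{B}_k$ the optimal primal-dual basis does not change, so $\lambda$ is an affine function of the bid, and the mapping $\lambda \mapsto$ (optimal schedule of~(\ref{model:bat})) is also affine on a corresponding polyhedral subdivision of $[\underline{\lambda},\overline{\lambda}]^T$ because~(\ref{model:bat}) with fixed $c_i,y_i$ is itself a parametric LP in $\lambda$. Composing these piecewise affine maps with the bidding strategy function $\mathcal{S}$ (which in the ERCOT arbitrage setting is piecewise affine in the schedule and LMP) yields a piecewise affine self-map $T:\mathcal{B}\to\mathcal{B}$ whose fixed points are exactly the Nash equilibria of the two-agent problem.

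Existence of at least one equilibrium follows by combining compactness of $\mathcal{B}$ with Kakutani's fixed-point theorem applied to the closure of $T$ at the boundaries between cells (the best-response correspondence is non-empty, convex-valued and upper hemicontinuous on each closed cell, since each subproblem is an LP with bounded optima). Finiteness then follows from the combinatorial structure: a fixed point $\alpha^\star$ inside the interior of a cell $\mathcal{B}_k$ must satisfy $T(\alpha)=\alpha$ with $T$ restricted to an affine map on $\mathcal{B}_k$, so the fixed-point set in $\mathrm{int}(\mathcal{B}_k)$ is either empty or an affine subspace; taking the generically-non-degenerate assumption on $(\alpha,\lambda,\beta,FL,LD)$, this subspace is a single point or empty. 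On the lower-dimensional faces between cells, the number of candidates is bounded by the number of active sets, which is at most $\binom{|L|+|M|+|I|}{\cdot}$ and thus finite. I would conclude by counting: the total number of equilibria is bounded above by $K$ plus the number of boundary intersections, both finite.

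The main obstacle will be handling the boundary cells where $T$ is multi-valued (when the LP optimum is degenerate or $\mathcal{S}$ is ambiguous at a kink); this requires either a genericity assumption or a careful selection argument to ensure that the set of fixed points remains isolated rather than collapsing into a continuum. A secondary difficulty is justifying that the bidding strategy $\mathcal{S}$ inherits enough regularity (piecewise affine and single-valued on each cell) for the fixed-point count to be finite, which may require restricting to the canonical self-scheduling-derived bids used elsewhere in the paper.
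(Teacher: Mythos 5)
Your proposal takes essentially the same route as the paper: the paper's entire justification for this theorem is the single sentence that the reasonable bids form a compact set (Lemma~\ref{lem:critical}) and that combining this with the sensitivity result of Theorem~\ref{thm:sensitivity} (finitely many active sets, with $\lambda$ locally affine in the bids) yields finitely many equilibria. Your write-up is a faithful and considerably more detailed elaboration of that same compactness-plus-piecewise-affine-partition argument; the genericity caveat you flag for ruling out a continuum of fixed points inside a degenerate cell is a real issue that the paper simply does not address, so if anything your version is more honest about where the argument needs an extra assumption.
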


However, ISOs are required to clear the market in limited time. To achieve this, ISOs need to accelerate the convergence of the bidding process and to avoid the oscillation between Nash equilibria. In the following, we study the conditions under which the bidding process converges to a unique Nash equilibrium.
Let 
\begin{align}
	&f(\alpha^c,\alpha^d) = \sum_{i \in I}(\sum_{t=1}^{T}\alpha^c_{it}p_{it}^c+\sum_{t=1}^{T}\alpha^d_{it}p_{it}^d ) +\sum_{m \in M}\sum_{t=1}^{T}\beta_m p_{mt};\\
	&g(\lambda) = \sum_{t=1}^{T}\lambda_t (p_{it}^c-p_{it}^d)= \sum_{t=1}^{T}\lambda_t p_{it}^c- \sum_{t=1}^{T}\lambda_t p_{it}^d.
\end{align}

We first define the following regularity conditions for the bidding strategy $ \mathcal{S} $: 

\begin{condition}\label{condition:mono}
	$ \partial\lambda_t^* = \partial_{l_t}\lambda_t(f^*(\mathcal{S}(p^{c,*}(l_t),p^{d,*}(l_t),l_t))) \ge 0 $, $ \forall t\in \Tt $. 
\end{condition}
We remark that Condition~\ref{condition:mono} excludes the bidding strategies that are obviously irrational for the BESSs owners, as otherwise there will be arbitrage opportunities by constructing a hedge portfolio. For example, a simple corollary of Condition~\ref{condition:mono} is that $ \alpha^c \le 0, \alpha^d \ge 0 $. 
Moreover, from the sensitivity analysis, $ \lambda_t $ is monotonic with respect to $ \alpha_{it}^c $ and $ \alpha_{it}^d $. The BESS can only bid into the market if $ \alpha_{it}^d \le \lambda_t $ and $ |\alpha_{it}^c| \ge \lambda_t $. Thus the market mechanism causes the following result. 

\begin{lemma}\label{lem:mono}
	Assume the BESS $ i $ is the only unit at the studied bus. Assume $ S^l c_{i} + \frac{\kappa^d c_i}{\eta^{d}}\Delta \leq e_{it} \leq S^u c_{i} - \kappa^cc_i\eta^{c}\Delta $. If $ \mathcal{S} $ is univalued and satisfies Condition~\ref{condition:mono}, then ISO and BESS $ i $ has a unique Nash equilibrium which is Pareto optimal if the BESS $ i $ tries and is able to bid into the market. 
\end{lemma}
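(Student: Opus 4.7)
The plan is to express one iteration of the AUS (Algorithm~\ref{alg:aus}) as a composition of two best-response maps acting on the LMP vector $\lambda$, and to deduce uniqueness and Pareto optimality of the fixed point from the structural assumptions. First, I would use the interior SOC assumption $S^l c_i + \frac{\kappa^d c_i}{\eta^d}\Delta \le e_{it} \le S^u c_i - \kappa^c c_i \eta^c \Delta$ to guarantee that, in the BESS self-scheduling model (\ref{model:bat}), constraints (\ref{con:BESS-SS-box-s}) and (\ref{con:BESS-SS-soc}) are inactive at the studied period; hence $p^{c*}_{it}, p^{d*}_{it}$ are determined purely by the sign of $\lambda_t$ relative to the arbitrage threshold together with the rate bounds, and since $\mathcal{S}$ is univalued the BESS bid map $\Phi(\lambda) := \mathcal{S}(p^{c*}(\lambda),p^{d*}(\lambda),\lambda)$ is a well-defined, deterministic function from LMP to bids.

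Next I would define the ISO response $\Psi(\alpha^c,\alpha^d)$ as the LMP obtained from (\ref{def:lmp}) at the optimum of (\ref{model:eco}). By Theorem~\ref{thm:sensitivity}, $\Psi$ is locally linear in the bids on each optimal basis cell, and Condition~\ref{condition:mono} combined with the fact that BESS $i$ is the only agent at its bus implies that $\Psi\circ\Phi$ is monotone nondecreasing in $\lambda_t$. Uniqueness would then follow by a monotone fixed-point argument: if $\lambda^{(1)}$ and $\lambda^{(2)}$ were two distinct fixed points with $\lambda^{(1)}_t<\lambda^{(2)}_t$, then monotonicity of $\Phi$ via Condition~\ref{condition:mono} and of $\Psi$ via sensitivity analysis would force a strict inequality between their images, contradicting the fixed-point property. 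For Pareto optimality, I would argue by deviation: any alternative admissible profile that strictly raises BESS profit must, since BESS is the only unit at the bus and $\mathcal{S}$ is univalued, correspond to a different bid vector that shifts $\lambda_t$; but (\ref{model:bat}) at the equilibrium LMP already maximizes BESS profit against that $\lambda$, so the alternative can only be beneficial if the ISO's objective in (\ref{model:eco}) is changed, which by inspection is strictly increased whenever the discharge bid is raised or the charging bid lowered within the admissible set $\alpha^d_{it}\le \lambda_t,\ \alpha^c_{it}\ge \lambda_t$. This contradicts the requirement that the ISO not be worsened.

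The hard part will be bridging the weak monotonicity stated in Condition~\ref{condition:mono} with the strict monotonicity needed to rule out non-unique fixed points on flat segments of the piecewise-linear map $\Psi$. I would handle this either by invoking non-degeneracy of the optimal basis at equilibrium, so that $\Psi\circ\Phi$ is strictly monotone in a neighborhood of the fixed point, or by observing that any two equilibria lying on the same flat segment yield identical $\lambda$, identical BESS profit, and identical ISO cost, so they coincide in all payoff-relevant respects and the Nash equilibrium is unique up to payoff equivalence. A secondary obstacle is the marginal case where $\alpha^d_{it}=\lambda_t$ or $\alpha^c_{it}=\lambda_t$ at equilibrium; there the ``tries and is able to bid in'' qualifier must be used carefully, together with the compact admissible set provided by Lemma~\ref{lem:critical}, to keep the deviation analysis closed under the clearing rule.
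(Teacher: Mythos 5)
Your Pareto-optimality argument is, at its core, the same contradiction the paper runs: assume a Pareto improvement, use the fact that BESS $i$ is the only unit at its bus plus Condition~\ref{condition:mono} to translate a profitable bid deviation into a movement of $\lambda$ with a definite sign, and conclude that the BESS's objective $g$ cannot strictly improve while the ISO's objective $f$ does not worsen. The paper's proof is exactly this (it shows $(\lambda'-\lambda)(p^c-p^d)\ge 0$ hence $g'\ge g$, contradicting $g'<g$), so on that half of the lemma your proposal and the paper coincide in substance, with yours adding the useful observation that the interior-SOC hypothesis is what makes the best response of model~(\ref{model:bat}) depend only on the price and the rate bounds.

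The genuine gap is in your uniqueness argument, and it is not confined to the ``flat segments'' you flag. You claim that if $\lambda^{(1)}_t<\lambda^{(2)}_t$ were two distinct fixed points of $\Psi\circ\Phi$, monotonicity ``would force a strict inequality between their images, contradicting the fixed-point property.'' But if both are fixed points, their images are $\lambda^{(1)}$ and $\lambda^{(2)}$ themselves, which already satisfy that strict inequality --- there is no contradiction. A monotone nondecreasing (even strictly increasing) self-map of an interval can have arbitrarily many fixed points; monotonicity of the composed best-response map is simply not a uniqueness mechanism. To close this you would need something stronger: that $\Psi\circ\Phi$ is a contraction, or monotone \emph{decreasing}, or an economic pinning-down argument (e.g., that at any equilibrium the admissible-bid constraints $\alpha^d_{it}\le\lambda_t$, $|\alpha^c_{it}|\ge\lambda_t$ bind, so the bid vector is determined by $\lambda$ and $\lambda$ in turn is determined by the non-BESS merit order, using the compactness from Lemma~\ref{lem:critical}). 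Your fallback of ``uniqueness up to payoff equivalence'' is a weaker statement than the lemma asserts. For what it is worth, the paper's own proof never addresses uniqueness either --- it proves only Pareto optimality --- so this is a hole in the lemma as published, but your proposed route does not fill it.
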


\begin{proof}
	For brevity, in the following, the symbol $ \forall t\in\mathcal{T} $ has been omitted. 
	We prove by contradiction. Assume there is a Pareto improvement. 
	Suppose $ f' \le f $ and $ g' < g $. 
	Because the BESS is the only unit at this bus and successfully bids into the market, according to market characteristics and Condition~\ref{condition:mono}, 
	$ (\alpha^{c})' \le \alpha^c $ or $ (\alpha^{d})' \le \alpha^d $ and hence $ (\lambda' - \lambda) (p^c-p^d) \ge 0 $, which implies that $ g' \ge g $. 
	This contradicts the assumption. Therefore, it completes the proof and every optimal solution of model~(\ref{model:eco})-(\ref{model:bat}) is Pareto optimal.
\end{proof}

If the BESSs owner tries to bid into the power market, the bidding prices cannot be higher than LMP for discharging and lower than LMP for charging. Therefore, from Theorem~\ref{thm:nash} and Lemma~\ref{lem:mono}, we have
\begin{theorem}
	If the BESS is the only unit at the studied bus trying to bid into the market, then for any univalued strategy function $ \mathcal{S} $, satisfying Condition~\ref{condition:mono}, Algorithm~\ref{alg:aus} converges to the optimal solution of model~(\ref{model:eco})-(\ref{model:bat}) if $ \forall t \in \Tt $, such that $ S^l c_{i} + \frac{\kappa^d c_i}{\eta^{d}}\Delta \leq e_{it} \leq S^u c_{i} - \kappa^cc_i\eta^{c}\Delta $. 
\end{theorem}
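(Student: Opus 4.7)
The plan is to combine the compactness result of Lemma~\ref{lem:critical}, the local linearity from Theorem~\ref{thm:sensitivity}, and the Pareto/uniqueness result of Lemma~\ref{lem:mono} to show that the iterates $\{\lambda^k\}$ generated by Algorithm~\ref{alg:aus} form a bounded, monotone sequence whose only accumulation point is the unique Nash equilibrium. First I would invoke Lemma~\ref{lem:critical} to confirm that, regardless of the bidding strategy $\mathcal{S}^k$ chosen by the BESS owner, each iterate satisfies $\underline{\lambda}\le\lambda^k\le\overline{\lambda}$, so the sequence lives in a compact set and admits a convergent subsequence. Next I would exploit the interior-energy hypothesis $S^l c_i+\frac{\kappa^d c_i}{\eta^d}\Delta\le e_{it}\le S^u c_i-\kappa^c c_i\eta^c\Delta$: this margin guarantees that for every $t\in\Tt$ the SOC dynamics (\ref{con:BESS-SS-box-s})–(\ref{con:BESS-SS-soc}) are inactive over one step, which decouples the self-scheduling problem (\ref{model:bat}) across time and reduces the analysis to a per-period scalar game between the ISO and the BESS.

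Once the problem is temporally decoupled, I would use Theorem~\ref{thm:sensitivity} to view $\lambda_t$ as a piecewise-linear, nondecreasing function of $(\alpha_{it}^c,\alpha_{it}^d)$ on each active set. Since the BESS is the only unit at its bus, the per-period self-scheduling response is driven purely by the sign of $\lambda_t^k$ compared with the marginal cost: the BESS charges when $\lambda_t^k$ is sufficiently low and discharges when it is sufficiently high. Condition~\ref{condition:mono}, together with the market-admissibility requirement that $\alpha^d\le\lambda$ for discharging and $|\alpha^c|\ge\lambda$ for charging, then forces the updated bids $(\alpha^c)^{k+1},(\alpha^d)^{k+1}$ to move $\lambda_t^{k+1}$ in the same direction across successive iterations, as already argued inside the proof of Lemma~\ref{lem:mono}. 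This yields a monotone update of $\lambda_t^k$ within the interval $[\underline{\lambda},\overline{\lambda}]$.

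Combining the monotonicity with boundedness from Lemma~\ref{lem:critical} gives convergence of $\{\lambda_t^k\}$ for every $t$ by the monotone convergence theorem, so the stopping criterion $\|\lambda^{k}-\lambda^{k-1}\|<\epsilon$ in Algorithm~\ref{alg:aus} is eventually satisfied. Any limit point $(\lambda^\ast,\mathcal{S}^\ast)$ is by construction a fixed point of the alternation: $\mathcal{S}^\ast$ is optimal for (\ref{model:bat}) given $\lambda^\ast$, and $\lambda^\ast$ is the LMP associated with the optimal solution of (\ref{model:eco}) given $\mathcal{S}^\ast$. Hence $(\lambda^\ast,\mathcal{S}^\ast)$ is a Nash equilibrium of the two-agent problem, and by Lemma~\ref{lem:mono} this equilibrium is unique and Pareto optimal, so it coincides with the optimal solution of model~(\ref{model:eco})–(\ref{model:bat}).

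The main obstacle I expect is making the monotonicity step rigorous across iterations in which the active set changes. Within a single piece of the piecewise-linear map the direction of $\lambda_t^{k+1}-\lambda_t^k$ follows directly from Theorem~\ref{thm:sensitivity} and Condition~\ref{condition:mono}, but when the bid crosses an active-set boundary the sensitivity formula switches, and I must argue that the monotone direction is preserved across the switch by appealing to continuity of $\lambda_t$ in the bids together with the one-sided directional derivatives inherited from the KKT system used in Lemma~\ref{lem:critical}. A careful case analysis on whether the BESS is in the charging, discharging, or idle regime at each $t$, combined with the exclusion of degenerate bids guaranteed by Condition~\ref{condition:mono}, should close this gap and complete the proof.
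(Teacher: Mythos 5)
Your proposal is correct in outline, but it takes a substantially more explicit route than the paper, which offers essentially no proof at all: the paper simply asserts that the theorem ``follows from Theorem~\ref{thm:nash} and Lemma~\ref{lem:mono},'' i.e., from the existence of finitely many Nash equilibria plus the uniqueness and Pareto optimality of the equilibrium under the stated hypotheses, without ever explaining why the alternating best-response dynamics of Algorithm~\ref{alg:aus} actually reach that equilibrium. You supply the missing dynamical argument: the interior-SOC hypothesis decouples the self-scheduling problem across time so that each $\lambda_t^{k+1}$ is a scalar function $T_t(\lambda_t^k)$ of the previous LMP; Condition~\ref{condition:mono} says precisely that $T_t$ is nondecreasing, so the orbit $\lambda_t^{k+1}=T_t(\lambda_t^k)$ is monotone; Lemma~\ref{lem:critical} confines it to $[\underline{\lambda},\overline{\lambda}]$; and monotone boundedness gives convergence to a fixed point, which Lemma~\ref{lem:mono} identifies as the unique Pareto-optimal equilibrium. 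This is a genuine strengthening of the paper's reasoning. Two caveats: first, your claim that the monotone-update direction is ``already argued inside the proof of Lemma~\ref{lem:mono}'' is a mischaracterization --- that proof is a static Pareto-improvement contradiction, not an iteration argument --- so the monotonicity of $T_t$ rests entirely on Condition~\ref{condition:mono} itself, which fortunately is stated in exactly the form you need. Second, the issue you flag about active-set switches is real and is not resolved by the paper either: to conclude that the limit of the monotone sequence is a fixed point you need (at least one-sided) continuity of $T_t$, and the LMP can jump when a line constraint enters or leaves the active set; Theorem~\ref{thm:sensitivity} only gives local linearity within a fixed active set, so closing this gap rigorously would require an additional nondegeneracy or upper-semicontinuity argument that neither you nor the paper provides.
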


\section{Experimental Results} \label{sec:exp}
In this section, we first describe the data set used in our experiments, then run the simulations to validate the efficacy of our new model and algorithms under different scenarios. Finally, we report all the simulation results. All numerical experiments are conducted on an AMD Ryzen™ Threadripper™ 3970X Processor with 64 threads on 3.7GHz and 256 GB memory. 
We use the ERCOT transmission network data from Breakthrough Energy~\cite{xu2020us} and historical loads and generation data from ERCOT website. All data are publicly available. 
We first use heuristic method in subsection~\ref{subsec:heuristic} to generate 100 BESSs' configurations, then we run Algorithm~\ref{alg:bayes} to solve the BESSIP model. 2880 hours of computational resources are allocated to Algorithm~\ref{alg:bayes}. 

\begin{figure}[htbp]
	\centering
	\includegraphics[width=\linewidth]{"./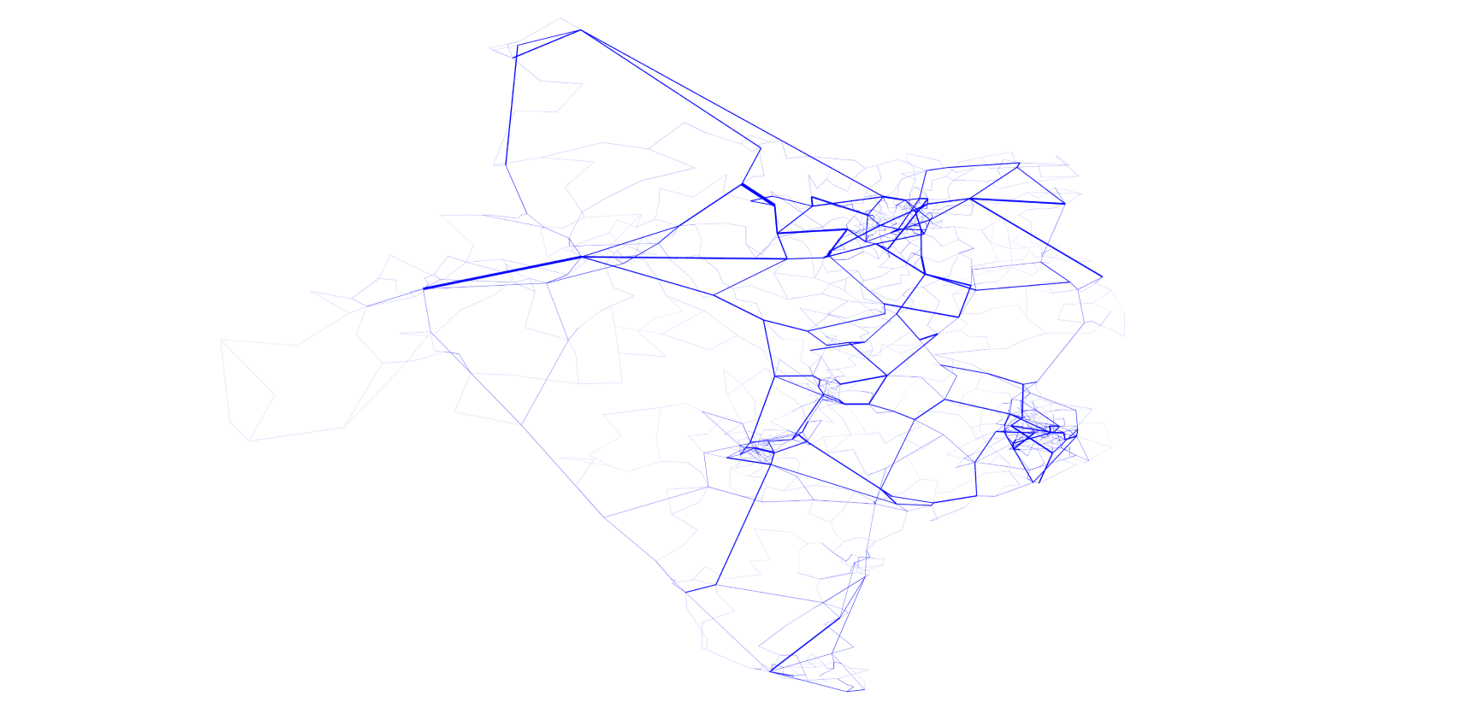"}
	\caption{Expectation of ERCOT Transmission Network Congestion Frequency}
	\label{fig:congestion}
\end{figure}

In order to visualize the congestion pattern of ERCOT, we show the average congestion frequency of ERCOT transmission network over a ten-year simulation in Figure~\ref{fig:congestion}, deeper color represents higher congestion frequency. In the experiment, the ERCOT transmission network comprises 2000 nodes. From the figure, we observe the high future congestion possibility in metropolitan areas in Texas, which appears as deep color clusters in the figure. In the meanwhile, the impact of wind farms in western Texas on the congestion frequency can also be observed from the figure as east-west tunnel with deep color, especially on east-west arterial transmission lines. The transmission congestion will be limiting the power flow from the wind farms to the load centers in the eastern part of Texas.
\begin{figure}[htbp]
	\centering
	\includegraphics[width=\linewidth]{"./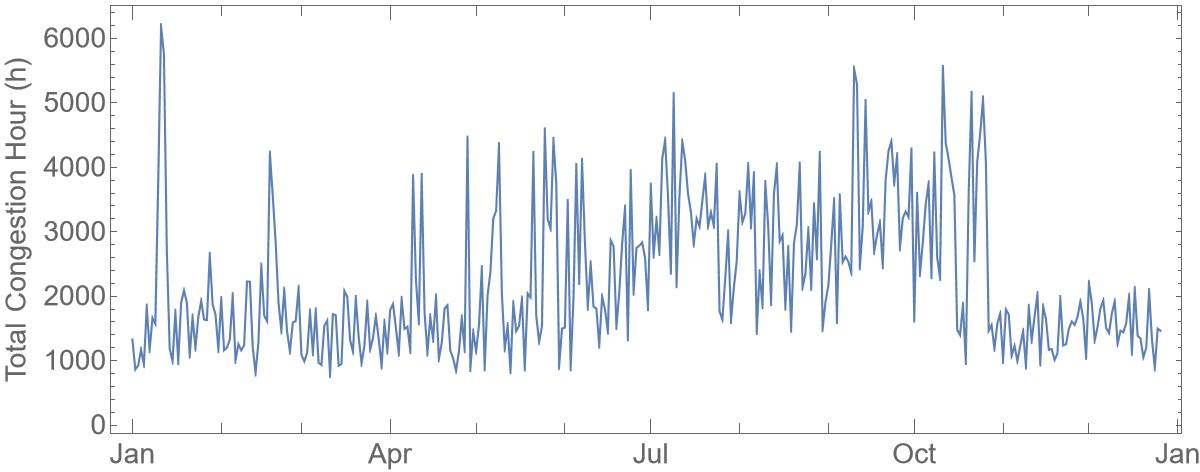"}
	\caption{Daily Total Congestion Hours of Simulated ERCOT Transmission Network}
	\label{fig:conghour}
\end{figure}
Figure~\ref{fig:conghour} shows the yearly distribution of average congestion frequency of ERCOT transmission network over ten-year simulation.
As shown in the figure, the congestion frequency is very high in the summer months, since the demand is usually high in summer months. 
Therefore, we conducted experiments of ERCOT simulation on 25\% days with highest $ S_{cong} $, which reduces the computational costs to $ \frac{1}{4} $. 87\% of the concentrated experiments give the same optimal BESSs locations as the original experiments, while among these experiments, the mean percentage error of the optimal BESS size is less than 15\%. 
This shows we can efficiently reduce the computational costs by only conducting simulation on days with the highest $ S_{cong} $ while not sacrificing too much solution quality. 

\begin{figure}[htbp]
	\centering
	\includegraphics[width=\linewidth]{"./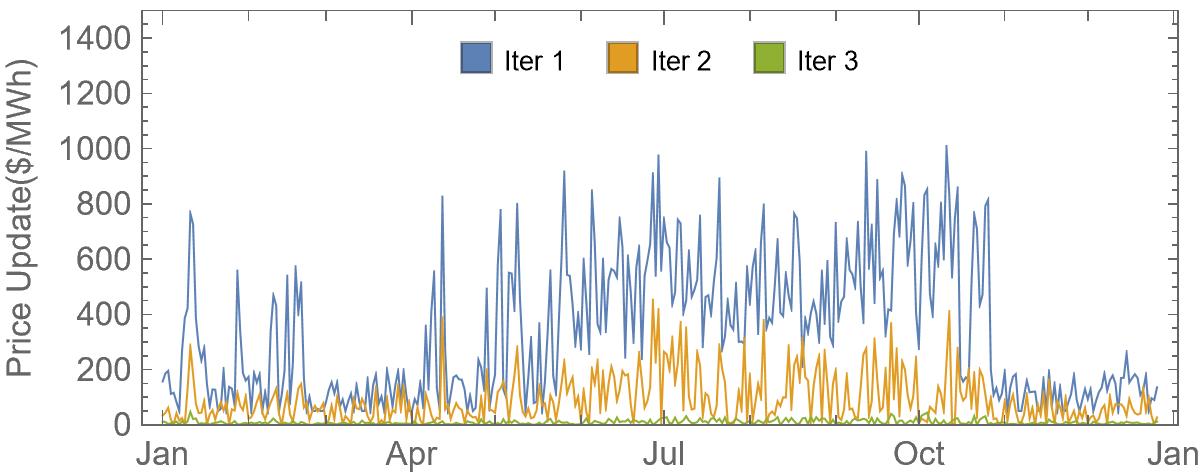"}
	\caption{$L_2$ Norm of the Daily Electricity Price Differences Between Each Iteration of the Algorithm}
	\label{fig:iter}
\end{figure}

Figure~\ref{fig:iter} shows the $L_2$ norm of the daily electricity price differences between each iteration of Algorithm~\ref{alg:aus} in Level 3, i.e. $ \norm{\lambda^k-\lambda^{k-1}}_{2} $ for $ k = 1,2,3 $. The figure shows that the $L_2$ norm of the daily electricity price differences is negligible after three iterations of the algorithm, which shows the convergence of the algorithm.

\begin{figure}[htbp]
	\centering
	\includegraphics[width=\linewidth]{"./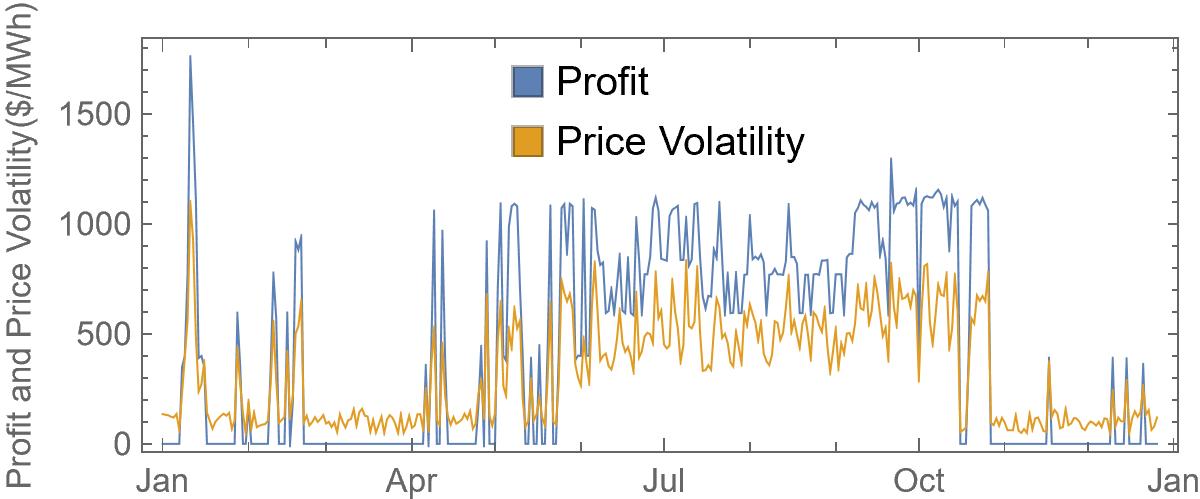"}
	\caption{Daily Volatility of the Electricity Price and the Daily Profit of the BESSs Per Unit of Installed Capacity (MWh)}
	\label{fig:pp}
\end{figure}

Figure~\ref{fig:pp} shows the daily volatility of the electricity price and the daily profit of the BESSs per unit of installed capacity (MWh). The figure shows a high correlation between the daily volatility of the electricity price and the daily profit of BESSs, which validates the fact that BESSs owners have higher arbitrage profits with higher price volatility. 
\begin{table}[!htbp]
	\centering
	\caption{Comparison of Different Models}
	\resizebox{\columnwidth}{!}{
		\begin{tabular}{lll}
		\toprule
		\multicolumn{3}{c}{Expected ROI (k\$) W/O Heuristics} \\
		BESS Impact & W/ BESS Impact & W/O BESS Impact \\
		Random Search & 25.631 & 31.849 \\
		Bayesian Optimization & 237.842 & 269.693 \\
		\midrule
		\multicolumn{3}{c}{Expected ROI (k\$) W/ Heuristics} \\
		BESS Impact & W/ BESS Impact & W/O BESS Impact \\
		Random Search & 136.351 & 152.464 \\
		Bayesian Optimization & 309.734 & 390.511 \\
		\bottomrule
	\end{tabular}%
	}
	\label{tab:comp}%
\end{table}%

Table~\ref{tab:comp} shows the comparison of the return on investment of BESSs with Bayesian optimization and random search under different assumptions with same computational resources. 
The table shows that the return on investment of BESSs is much higher with Bayesian optimization than the return with random search. This is because Bayesian optimization can find the optimal BESSs location in a much shorter time than a random search. From our experiments, Bayesian optimization obtains the same solution with 9\% computational resources of a random search, or 6\% computational resources of a grid search. The Bayesian based approach is at least 16 times faster than the state-of-the-art grid search. 
Another interesting observation is that the heuristic-based model can find better BESSs' configurations than the model without heuristics. This is because the score of congestion criterion can reflect the potential arbitrageable space of the BESSs and reduce the search space of the BESSs location significantly.
From Table~\ref{tab:comp}, we can also observe that the return on investment of BESSs is overestimated 21\% when the BESSs' impact on the transmission network is not considered. This is because the BESSs' impact on transmission network can reduce transmission congestion, which can further reduce the electricity price volatility and hence reduce the potential arbitrage profit of the BESSs.
\begin{table}[htbp]
	\centering
	\caption{Comparison of Solutions and Computational Costs of Scenarios}	
	\resizebox{\columnwidth}{!}{
		\begin{tabular}{lll}
			\toprule
			Model & Deviation & Computational Cost (s) \\
			BESSIP (Model (\ref{model:eco})-(\ref{model:bat})) & NA    & 3021 \\
			Zero Fixed Cost (Model (\ref{model:S})) & 3.2\% & 2996 \\
			Complementarity Constraints (Constraint (\ref{con:complimentary})) & 3.5\% & 19159 \\
			Zero Fixed Cost + Complementarity Constraints & 5.4\% & 18925 \\
			\bottomrule
		\end{tabular}%
}
	\label{tab:comp-assump}%
\end{table}%

Table~\ref{tab:comp-assump} presents a comparison of different scenarios' solutions and computational costs. The four scenarios are BESSIP and variants of BESSIP with zero fixed costs, complementarity constraints. Deviation indicates the average percentage difference between the scenario's solution and the optimal solution of BESSIP. 
The computational costs of BESSIP variants with the complimentarity constraints are higher than BESSIP, as the complimentarity constraints are nonconvex bilinear constraints. BESSIP with zero fixed costs slightly differs from BESSIP, indicating that fixed costs can be neglected in long-term planning problems in some practical cases.

\section{Conclusion} \label{sec:conclusion}
In this paper, we considered the strategic BESS investment problem and proposed a three-phase approach for it. We presented effective heuristics and algorithms to solve the sub-problems in each phase. 
Our experiments illustrated that the optimal solutions obtained from our proposed approach provide valuable insights into the ERCOT electrical grid's structure, particularly regarding the changes in congestion patterns resulting from the deployment of large-scale BESSs. 
Through our experiments, we demonstrated that the proposed approach can significantly reduce the computational burden of the BESS investment problem.
We established the convergence of the proposed approach and showed the Pareto optimality of the problem.

\section*{Acknowledgement}
We would like to thank Dr. Siddharth Bhela at Siemens Technolgoy for insightful discussions.

\bibliographystyle{IEEEtran}
\bibliography{./references}

\end{document}